\newtheorem{theorem}{Theorem}
\newtheorem{proposition}[theorem]{Proposition}
\newtheorem{remark}[theorem]{Remark}
\def\R{\mathbb R}
\def\e{\varepsilon}
\def\Om{\Omega}
\lbrace\begin{array}{@{}l@{}}}%
\def\pa{\partial}
\def\d{\, \mathrm{d}}
\DeclareMathOperator*{\hd}{hd}
\title{A note on the stability of the Cheeger constant of $N$-gons}
\author{M. Caroccia}
\address{Dipartmento di Matematica, Universit\`a di Pisa, Largo Bruno Pontecorvo 5, 56127 Pisa, Italy}
\email{caroccia.marco@gmail.com}
\author{R. Neumayer}
\address{Department of Mathematics, University of Texas at Austin, Austin, TX, USA}
\email{rneumayer@math.utexas.edu}
\begin{document}
\maketitle
\begin{abstract}
The regular $N$-gon provides the minimal Cheeger constant in the class of all $N$-gons with fixed volume. This result is due to a work of Bucur and Fragal\`a in 2014. In this note, we address the stability of their result in terms of the $L^1$ distance between sets. Furthermore, we provide a stability inequality in terms of the Hausdorff distance between the boundaries of sets in the class of polygons having uniformly bounded diameter. Finally, we show that our results are sharp, both in the exponent of decay and in the notion of distance between sets.
\end{abstract}
\text{}\\
Given a Borel set $\Om \subset \R^n$ with finite measure, the \textit{Cheeger constant} of $\Om$ is defined by 
$$h(\Om) = \inf \left\{ \frac{P(E)}{|E|}\  \Big{|} \   E \text{ measureable, }E\subseteq \Om\right\}.$$
\noindent One can show that the infimum above is always attained, and a set $E \subseteq \Om$ such that $\frac{P(E)}{|E|} = h(\Om)$ is called a \textit{Cheeger set} of $\Om$.


\noindent The Cheeger constant is related to the first Dirichlet eigenvalue of the $p$-Laplacian of a set $\Om\subset \R^n$:
\begin{equation}\label{p-lap eigen}
\lambda_p(\Om):=\inf\left\{\int_{\Om} |\nabla u|^p \d x \ \Big{|} \ u\in W^{1,p}_0(\Om), \  \|u\|_{L^p(\Om)}=1\right\}.
\end{equation}
Indeed, for every set $\Om$ and every $p>1$,
\begin{equation}\label{lb p-eig}
\lambda_p(\Om)\geq \left(\frac{h(\Om)}{p}\right)^p \ \ \text{and} \quad \lim_{p\rightarrow 1} \lambda_p(\Om)=h(\Om).
\end{equation}
See, for example, \cite{KN08} for more details about the relation between the Cheeger constant and the first Dirchlet eigenvalue of the p-Laplacian  or \cite{Bu10} for more details about spectral problems. It is a well known fact that for every $p>1$, the ball provides the minimal value for $\lambda_p$ among all sets with fixed volume. The \textit{Cheeger inequality} expresses this fact for the limit case $p=1$:
\begin{equation}\label{chee in}
|\Om|^{\frac{1}{n}}h(\Om)\geq  h(B), \ \ \ \text{whenever $\Om$ is a Borel set},
\end{equation}
where $B$ denotes the unit-area ball of $\R^n$. A celebrated conjecture due to P\' olya and Szeg\H{o}, states that if one considers the minimization problem for $\lambda_p$ among the class of $N$-gons with fixed area then the solution is given by the regular $N$-gon. In \cite{PS51} the authors show the validity of the conjecture for $p=2$ in the case $N=3$ and $N=4$. \\
\text{}\\
Recent progress has been made in \cite{BF14}, where the authors prove the validity of the conjecture for the limit case $p=1$ (i.e. the Cheeger constant). In particular, they prove that for every fixed $N$, the regular $N$-gon minimizes the Cheeger constant among all $N$-gons with fixed volume: 
 \begin{equation}\label{BF}\sqrt{|\Om|}h(\Om) \geq h(\Om_0) \quad \text{ for } \Om \in P_N,
\end{equation}
where here and in the sequel $P_N $ is the set of all $N$-gons in $\R^2$ and $\Om_0$ denotes the unit-area regular $N$-gon.
Our goal in this short note is to address the stability of \eqref{BF} in the class of $N$-gons. Namely, our theorem is the following.
\begin{theorem}\label{gblstability} 
For each $N \geq 3$, there exists $C$ and $\eta>0$ depending only on $N$ such that if $\Omega \in P_N$ with $|\Omega| = 1$ and $h(\Omega) - h(\Om_0) \leq \eta,$ then there exists a rigid motion $\rho$ of $\mathbb{R}^2$ such that 
\begin{equation}\label{hdineq}
\hd(\partial \Omega, \partial \rho \Omega_0)^2 \leq C ( h(\Omega) - h(\Om_0)).
\end{equation}
As a consequence, for every $N \geq 3$, there exists some constant $C_1$ depending only on $N$ such that for any $\Omega \in P_N$ with $|\Omega|=1$, there exists a rigid motion $\rho$ of $\mathbb{R}^2$ such that
\begin{equation}\label{L1}
|\Om \Delta \rho \Om_0|^2 \leq C_1 \left( h(\Om) - h(\Omega_0)\right).
\end{equation}
Moreover, for every $M>0$, there exists a constant $C_2$ depending on $N$ and $M$ such that for any $\Om \in P_N$ with $\text{diam}(\Om) <M$ and $|\Om| = 1$, the following holds:
\begin{equation}\label{hdM}
\hd(\partial \Om, \partial\rho\Om_0)^2  \leq C_2 \left( h(\Om) - h(\Omega_0)\right),
\end{equation}
where $\rho$ is a suitable rigid motion of $\mathbb{R}^2$.
\end{theorem}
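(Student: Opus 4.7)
\textbf{Reduction of \eqref{L1} and \eqref{hdM} to \eqref{hdineq}.} Both statements follow from the local Hausdorff bound \eqref{hdineq} via a dichotomy on the size of the Cheeger deficit $\de(\Om) := h(\Om) - h(\Om_0) \geq 0$. When $\de(\Om) \geq \eta$, inequality \eqref{L1} is immediate from $|\Om \Delta \rho\Om_0|^2 \leq 4 \leq (4/\eta)\de(\Om)$, and \eqref{hdM} follows from $\hd(\partial\Om, \partial\rho\Om_0)^2 \leq (2M)^2 \leq (4M^2/\eta)\de(\Om)$ under the diameter assumption. When $\de(\Om) < \eta$, \eqref{hdM} is exactly \eqref{hdineq}, while \eqref{L1} follows by combining \eqref{hdineq} with the elementary estimate $|\Om \Delta \rho\Om_0| \leq C(P(\Om) + P(\Om_0))\hd(\partial\Om, \partial\rho\Om_0)$ for convex sets (noting that in this regime $P(\Om)$ is close to $P(\Om_0)$ and hence uniformly bounded).

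\textbf{Proof of \eqref{hdineq} by compactness.} I would argue by contradiction: assume there is a sequence $\Om_k \in P_N$, $|\Om_k|=1$, with $\de(\Om_k) \to 0$ and $\hd(\partial\Om_k, \partial\rho\Om_0)^2 / \de(\Om_k) \to \infty$ for every rigid motion $\rho$. Since a unit-area convex set of large diameter has large Cheeger constant (for a thin convex strip of area one, $h \gtrsim \diam$), the $\Om_k$ have uniformly bounded diameter, and after translating, Blaschke's selection theorem produces a Hausdorff limit $\Om_\infty$, a convex polygon with at most $N$ vertices. Continuity of $h$ under Hausdorff convergence of convex sets with uniform area lower bound yields $h(\Om_\infty) = h(\Om_0)$. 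If $\Om_\infty$ had $M < N$ vertices, then applying \eqref{BF} to $M$-gons would give $h(\Om_\infty) \geq h(\Om_0^{(M)}) > h(\Om_0)$, using the strict decrease of the Cheeger constant of the unit-area regular polygon in its number of sides; this contradicts $h(\Om_\infty) = h(\Om_0)$. So $\Om_\infty$ is a genuine unit-area $N$-gon attaining equality in \eqref{BF}, and the equality case from \cite{BF14} identifies $\Om_\infty$ with a rigid motion of $\Om_0$.

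\textbf{Local quadratic analysis.} After applying suitable rigid motions, $\Om_k \to \Om_0$ in Hausdorff distance. I would parametrize $P_N$ near $\Om_0$ by the ordered vertex vector $v \in \R^{2N}$ and exploit the implicit formula $h(\Om) = 1/r(\Om)$, where $r(\Om)$ is the smaller root of $|\Om| - r P(\Om) + r^2 (S(\Om) - \pi) = 0$, with $S(\Om) := \sum_{i=1}^N \cot(\alpha_i/2)$. Implicit differentiation at $\Om_0$ yields the expansion
\[
\de(\Om) = c_1 \bigl(P(\Om) - P(\Om_0)\bigr) + c_2 \bigl(S(\Om) - S(\Om_0)\bigr) + o(|v - v_0|^2),
\]
with $c_1, c_2 > 0$ depending only on $N$ (positivity comes from the strict inequality $r_0 < r_{\mathrm{in}} = 2/P_0$ enjoyed by the tangential regular polygon). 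Now $\Om_0$ minimizes both $P$ (polygonal isoperimetric inequality) and $S$ (Jensen's inequality for the convex function $\cot(\cdot/2)$ on $(0,\pi)$ with constraint $\sum \alpha_i = (N-2)\pi$) among unit-area $N$-gons; quantitative refinements of these two inequalities bound $P - P(\Om_0)$ below by the squared side-length deviations and $S - S(\Om_0)$ below by the squared angle deviations. Since a convex $N$-gon is determined up to rigid motion by its ordered side lengths and interior angles, the two quantitative controls combine to give $|v - v_0|^2 \leq C\de(\Om)$ for a suitable labeling, hence $\hd(\partial\Om_k, \partial\rho_k\Om_0)^2 \leq C'\de(\Om_k)$, contradicting the standing assumption.

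\textbf{Main obstacle.} The delicate step is the final coercivity assertion: the quadratic forms produced by the quantitative isoperimetric and quantitative Jensen inequalities must \emph{jointly} dominate all $2N-4$ effective shape degrees of freedom modulo rigid motions and area rescaling. Equivalently, one must verify that $\ker(\nabla^2 P|_{v_0}) \cap \ker(\nabla^2 S|_{v_0})$ is exhausted by infinitesimal rigid motions and area rescalings—no purely ``shape'' perturbation can be invisible to both $P$ and $S$ at second order. The $D_N$ dihedral symmetry of $\Om_0$ should make this verification tractable via decomposition of the Hessians into irreducible representations, reducing the problem to a finite number of one- or two-dimensional checks.
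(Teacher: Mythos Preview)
Your quantitative step contains a sign error that undermines the strategy. From $h(\Om)=\tfrac12\bigl(P+\sqrt{P^2-4(S-\pi)}\bigr)$ (unit area, $S=\sum_i\cot(\gamma_i/2)$) one computes $\partial h/\partial S=-\bigl(P^2-4(S-\pi)\bigr)^{-1/2}<0$, so the coefficient you call $c_2$ is \emph{negative}. Since Jensen gives $S(\Om)\ge S(\Om_0)$, the angle term $c_2(S-S_0)$ is non-positive and \emph{competes against} the perimeter term; thus you cannot bound $\de(\Om)$ from below by separately inserting a quantitative lower bound for $P-P_0$ and one for $S-S_0$. The ``joint coercivity'' question you isolate at the end (trivial intersection of the two Hessian kernels) would only yield a lower bound on $\de$ if both coefficients were positive; with $c_2<0$ it is simply not the right question. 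A smaller issue: in your compactness step you invoke the estimate $h\gtrsim\diam$ and Blaschke selection, both of which presuppose convexity of the $\Om_k$, something not yet established at that point---indeed the tentacle example in the paper shows non-convex $N$-gons can have bounded Cheeger constant and unbounded diameter. The paper instead uses the $L^1_{\loc}$ compactness from \cite{BF14} and deduces convexity only \emph{after} Hausdorff closeness to $\Om_0$ is known.

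The paper's route is shorter and sidesteps the sign difficulty altogether. Once $\Om$ is Hausdorff-close to $\Om_0$ it is convex and Cheeger regular, so $2h(\Om)=P(\Om)+\sqrt{P(\Om)^2-4\tau(\Om)}$ with $\tau=S-\pi$. The polygonal isoperimetric inequality \eqref{polyisop} gives $P(\Om)^2-4\tau(\Om)\ge 4\pi$, while the regular $N$-gon is circumscribed and hence achieves equality: $P(\Om_0)^2-4\tau(\Om_0)=4\pi$. Subtracting,
\[
2\bigl(h(\Om)-h(\Om_0)\bigr)=P(\Om)-P(\Om_0)+\sqrt{P(\Om)^2-4\tau(\Om)}-\sqrt{4\pi}\ \ge\ P(\Om)-P(\Om_0),
\]
so the angle contribution is \emph{absorbed}, not exploited. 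One then invokes as a black box the quantitative polygonal isoperimetric inequality of \cite{IN14,CM14} (Proposition~\ref{isoineSharp}), which already delivers $\hd(\partial\Om,\partial\rho\Om_0)^2\le C\bigl(P(\Om)^2-P(\Om_0)^2\bigr)$ for convex unit-area $N$-gons. No second-order vertex expansion, no Jensen refinement, and no representation-theoretic kernel check are needed.
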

Here $\hd(\cdot,\cdot)$ denotes the Hausdorff distance between sets. As we will show below, in general we cannot expect to have stability in the form of \eqref{hdineq} when $h(\Om)$ is far from being optimal, as the Cheeger constant does not detect the presence of small tentacles (see Figure \ref{counterex}), even in the class of $N$-gons. \\
\text{}\\
Before moving on to the proof of Theorem \ref{gblstability}, for sake of completeness we briefly retrieve results, definitions, and terminology that we use from \cite{KL06} and \cite{BF14}.
For a convex subset $\Om\subseteq \R^2$ there exists a unique Cheeger set $C$ with the the following characterization: $C = \Om^{(i)} \oplus B_R,$ where
$R = \frac{1}{h(\Om)}$ and 
$$\Om^{(i)} = \{ x \in \Om \ | \ \text{dist}(x,\partial \Om) >R\}$$
is called the \textit{inner parallel set of $\Om$.}  
Here, given two sets $E$ and $F$, $E\oplus F = \{ x+y\ |\ x\in E, y\in F\}$. An $N$-gon $\Om$ is called \textit{Cheeger regular} if the boundary of its Cheeger set touches all $N$ sides of $\Om$.
In \cite{KL06}, it is shown that if a set $\Om \in P_N$ is convex and Cheeger regular, then
\begin{equation}\label{CRformula}
h(\Om) = \frac{ P(\Om) + \sqrt{ P(\Om)^2 - 4\tau(\Om)|\Om|}}{2|\Om|},
\end{equation}
where 
$$\tau(\Om) = \sum_{i=1}^N\left[ \tan \left( \frac{\pi - \gamma_i}{2}\right) - \left(\frac{\pi - \gamma_i}{2}\right) \right]$$
and where $\{\gamma_i\}_{i=1}^{N}$ are the inner angles of the polygon.
In the general case, the stability of the Cheeger inequality was obtained in \cite{FMP10} as a consequence of the quantitative isoperimetric inequality for sets of finite perimeter (\cite{FMP09}, \cite{FuMP06}, \cite{CL10}). As in the general case, Theorem \ref{gblstability} is a consequence of the stability of the polygonal isoperimetric inequality, proved in \cite{IN14}. More precisely, we make use of the following proposition proved in \cite{CM14} as a consequence of \cite{IN14}.
\begin{proposition}\label{isoineSharp}
For every $N\geq 3$, there exists a positive constant $C$ depending only on $N$ with the following property: for every convex unit-area $\Om\in P_N$, there exists a rigid motion $\rho$ of $\R^2$ such that
\begin{equation}
    \label{quantitative indrei inq}
    \hd(\pa \Om,\pa \rho\Om_0 )^2 \leq  C\,\left(P(\Om)^2-P(\Om_0)^2\right).
\end{equation}
\end{proposition}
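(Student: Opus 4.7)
The plan is to deduce \eqref{quantitative indrei inq} from the $L^1$-stability of the polygonal isoperimetric inequality proved in [IN14], combined with a \emph{linear} $L^1$-to-Hausdorff upgrade valid in the class of convex $N$-gons near $\Om_0$. The argument splits into a compactness reduction to the near-regular regime followed by a quantitative polygonal estimate.

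\textbf{Step 1 (Reduction to small deficit).} I would first check that \eqref{quantitative indrei inq} holds trivially when $P(\Om)^2-P(\Om_0)^2\geq\eta_0$ for some threshold $\eta_0=\eta_0(N)>0$. Convexity and $|\Om|=1$ give $\diam(\Om)\leq P(\Om)/2$, hence $\hd(\pa\Om,\pa\rho\Om_0)\leq P(\Om)+P(\Om_0)$ for every rigid motion $\rho$; a direct case split on whether $P(\Om)\leq 2P(\Om_0)$ or $P(\Om)\geq 2P(\Om_0)$ then yields $\hd^2\leq C_0(N)(P(\Om)^2-P(\Om_0)^2)$.

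\textbf{Step 2 (Application of [IN14] and Hausdorff closeness).} In the complementary regime $P(\Om)^2-P(\Om_0)^2<\eta_0$, the Indrei--Nurbekyan result supplies a rigid motion $\rho$ and a constant $C_1=C_1(N)$ with
$$ |\Om\,\Delta\,\rho\Om_0|^2\leq C_1\bigl(P(\Om)^2-P(\Om_0)^2\bigr). $$
For $\eta_0$ small enough, I would invoke a qualitative compactness argument---exploiting upper semicontinuity of the number of vertices under Hausdorff convergence, uniqueness of the regular minimizer among $N$-gons, and the strict inequality between the perimeters of the unit-area regular $N$-gon and $(N-1)$-gon---to force $\rho$ to be such that $\Om$ is also Hausdorff-close to $\rho\Om_0$. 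This closeness in turn yields a bijective correspondence between the vertices of $\Om$ and those of $\rho\Om_0$, together with a uniform lower bound $\ell_0=\ell_0(N)$ on the edge lengths of $\Om$.

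\textbf{Step 3 (Linear $L^1$-to-Hausdorff bound and conclusion).} The heart of the proof is then the polygonal estimate
$$ \hd(\pa\Om,\pa\rho\Om_0)\leq C_2(N)\,|\Om\,\Delta\,\rho\Om_0| $$
in the near-regular regime. I would let $d=\hd(\pa\Om,\pa\rho\Om_0)$ be realized at $x\in\pa\Om$, let $e$ be an edge of $\Om$ containing $x$, and let $e^*$ be the correspondent edge of $\rho\Om_0$ (of length $\geq\ell_0/2$ in the near regime). By convexity, the planar strip bounded by $e$, $e^*$ and the two short segments joining corresponding endpoints lies inside $\Om\,\Delta\,\rho\Om_0$. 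Since this strip attains width $d$ at $x$ and has length $\geq \ell_0/2$, a trapezoidal estimate gives area $\geq c(N)\,\ell_0\,d$. Combining with Step 2 yields $\hd^2\leq C_2^2|\Om\,\Delta\,\rho\Om_0|^2\leq C_2^2 C_1(P(\Om)^2-P(\Om_0)^2)$, which together with Step 1 proves \eqref{quantitative indrei inq}. The main obstacle will be the linear (rather than square-root) dependence $\hd\leq C|\cdot\,\Delta\,\cdot|$: this refinement is peculiar to polygons with edges bounded below, relying crucially on the uniform $\ell_0$ from Step 2 and on ruling out configurations where several vertices of $\Om$ cluster near a single vertex of $\Om_0$.
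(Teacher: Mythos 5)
The paper does not actually prove this proposition: it is imported verbatim from [CM14], where it is derived from the $L^1$ stability result of [IN14]. Your proposal therefore reconstructs the missing derivation rather than following anything in this note, and its architecture (trivial regime for large deficit, [IN14] in the small-deficit regime, then an $L^1$-to-Hausdorff upgrade exploiting the polygonal structure) is essentially the route taken in [CM14]. Steps 1 and 2 are sound, modulo one slip: the bound $\hd(\pa\Om,\pa\rho\Om_0)\leq P(\Om)+P(\Om_0)$ does not hold for \emph{every} rigid motion but only for a suitably chosen one (e.g.\ one for which the boundaries intersect, as the paper itself notes when deriving \eqref{hdM}); this is cosmetic. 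The compactness argument in Step 2 does deliver the vertex correspondence and the edge-length lower bound $\ell_0$, since the limit of unit-area convex $N$-gons with perimeter tending to $P(\Om_0)$ is forced to be the regular $N$-gon, and its $N$ extreme points must each be approximated by a distinct vertex of $\Om$.

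The genuinely delicate point is Step 3, and as written it has a gap. The generic estimate for convex bodies only gives $|\Om\Delta\rho\Om_0|\geq c\,\hd^{3/2}$ (and for polygons a cap cut off near a vertex can a priori have area of order $\hd^2$, via the wedge of angle $\alpha_0$ at the extremal vertex), either of which would only yield $\hd^2\leq C(P(\Om)^2-P(\Om_0)^2)^{1/2}$ --- not the claimed inequality. The linear bound $\hd\leq C|\Om\Delta\rho\Om_0|$ is nevertheless true in the near-regular regime, but your strip argument needs repair in two places. First, if the extremal point $x$ is a vertex $w_i$ of $\Om$, its displacement from $v_i$ may be almost tangential to one of the two corresponding edges of $\rho\Om_0$, in which case the strip along that edge is degenerate and carries no area; one must observe that the normal components of $w_i-v_i$ relative to the two adjacent edge directions cannot both be smaller than $c(N)\,|w_i-v_i|$, because consecutive edge normals of $\rho\Om_0$ differ by the angle $2\pi/N$, and then run the trapezoidal estimate on the better of the two edges. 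Second, the containment of the strip in $\Om\Delta\rho\Om_0$ is not automatic when $e$ and $e^*$ cross: the strip then splits into two triangles lying in $\Om\setminus\rho\Om_0$ and $\rho\Om_0\setminus\Om$ respectively, and one must also rule out interference from neighboring edges, which is where the lower bound $\ell_0$ on edge lengths and the Hausdorff smallness $\e\ll\ell_0$ are really used. You should also treat symmetrically the case where the Hausdorff distance is realized at a point of $\pa\rho\Om_0$ rather than of $\pa\Om$. With these repairs the argument closes, and the cleanest formulation of Step 3 is the first-order statement that for convex $N$-gons with vertices $w_i$ in small neighborhoods of the $v_i$ one has $|\Om\Delta\rho\Om_0|\geq c(N)\max_i|w_i-v_i|\geq c(N)\,\hd(\pa\Om,\pa\rho\Om_0)$, which follows since a vertex displacement field whose normal component vanishes on every edge must vanish identically.
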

Finally, we recall the isoperimetric inequality for convex polygons, which states that, for every convex polygon $\Om$,
\begin{equation}\label{polyisop}\frac{P(\Omega)^2}{4|\Omega|} \geq \tau(\Omega) + \pi,
\end{equation}
with equality if and only if $\Omega $ is a circumscribed polygon (namely a polygon for which the largest inscribed circle touches all of his sides). 

In their paper, Bucur and Fragal\`a remark that in the case of simple convex polygons,  the proof of their theorem becomes straightforward by exploiting the characterization of the Cheeger set for a convex bounded Borel set $\Om$ contained in \cite{KL06}. We are able to reduce to this case with a compactness argument, then, by tracing through their argument and making use of Proposition~\ref{isoineSharp}, we can easily achieve the proof. 

\begin{proof}[Proof of Theorem \ref{gblstability}]
Notice that $|\Om \Delta \rho \Om_0| \leq 2 $ for all $\Omega \subset \mathbb{R}^2$ with $|\Om|=1$ and for every rigid motion $\rho$ of $\R^2$, so inequality \eqref{L1} follows immediately from \eqref{hdineq} by choosing the constant to be sufficiently large. Furthermore, \eqref{hdM} is a consequence of \eqref{hdineq}. Indeed, 
$$\hd( \partial E , \partial F) \leq \text{diam}(E) + \text{diam}(F),$$
 for two generic closed sets $E$ and $F$ in $\mathbb{R}^2$ such that
\begin{equation}\label{diameter}
\partial E \cap \partial F \neq \emptyset.
\end{equation}
 Up to a translation, \eqref{diameter} holds for $\Omega$ and $\Om_0$; this and the boundedness of the diameter imply that 
$$\hd(\partial \Om, \partial \Om_0) \leq M + \text{diam}(\Omega_0),$$
so by choosing the constant large enough depending on $M$, we obtain \eqref{hdM}. Therefore let us focus on the proof of \eqref{hdineq}. We divide the proof into two steps. \\
\text{}\\
\textit{Step 1: Qualitative stability.} We prove that for every fixed $\e>0$ there exists $\eta_0$ such that if $\Om$ is a unit-area $N$-gon with $h(\Om)-h(\Om_0)<\eta_0$, then for some rigid motion $\rho$ of $\R^{2}$, we have $\hd(\pa \Om,\pa\rho \Om_0)<\e$. Indeed, consider a sequence of unit-area $N$-gons $\Om_k$ such that $h(\Omega_k) \to h(\Om_0)$.  
Following the compactness argument in Proposition $9$ in \cite{BF14}, we obtain a subsequence of $\Omega_k$ converging in $L^1_{\text{loc}}$ to a limit $\Pi$ with $|\Pi| \leq 1$. We let $\Pi_0 = \sqrt{  |\Pi|^{-1}} \Pi.$ Then, as shown in \cite{BF14},
$$h(\Pi_0)  = \sqrt{  |\Pi|} h(\Pi) \leq \underset{k \to \infty } \liminf \sqrt{  |\Pi|} h(\Omega_k) =  h(\Omega_0).$$
On the other hand, by minimality, 
$$h(\Omega_0) \leq h(\Pi_0).$$
Therefore, $\Pi_0= \Omega_0$ up to a rigid motion, and so $\Pi$ is a regular $N$-gon.  Consider a ball $B_R$ such that $\Pi \subset\subset B_R$ and suppose that there is some subsequence of $\Omega_k$ such that each $\Omega_k$ has a vertex not contained in $B_R$. Then we conclude that $\Pi$ must have at most $N-1$ vertices, contradicting the fact that $\Pi$ is a regular $N$-gon. 
Thus $\Om_k \subset B_R$ for $k$ large, and, in particular, $\Omega_k \to \Pi$ in $L^1$. Therefore $|\Pi| = 1$ and thus $\Pi=\rho\Om_0$ for some rigid motion $\rho$. 
If a bounded sequence of $N$-gons converges in $L^1$ to an $N$-gon, then the boundaries converge in the Hausdorff distance, so 
$$\hd (\partial \Omega_k, \partial \rho \Omega_0) \to 0$$ 
and we achieve the proof of Step 1.\\
\text{}\\
\textit{Step 2: Quantitative stability.} Fix $\e>0$, let $\eta_0$ be the constant given from the Step 1 and let $\Om$ be a unit-area $N$-gon such that $h(\Om)-h(\Om_0)<\eta$ for some $\eta<\eta_0$ to be fixed. Thanks to Step 1 and because $\Omega_0$ is convex, up to choosing an $\e$ small enough,  $\Omega$ must be as well. Moreover, since $\Omega_0$ is Cheeger regular, $\Omega$ will also be Cheeger regular up to further decreasing $\e$ and with a suitable choice of $\eta$. Indeed, we note that a sufficient condition for an $N$-gon to be Cheeger regular is for its inner parallel set to also be an $N$-gon.
The inner parallel set $\Omega^{(i)}$ have boundary given by
$$\partial \Omega^{(i)} = \left\{ x \in \Omega \ \Big{|} \ \text{d}(x, \partial \Omega) = \frac{1}{h(\Om)} \right\}.$$
Therefore, since
$\hd (\partial \Omega, \partial \Omega_0)\leq \e$ and $h(\Om)-h(\Om_0)<\eta$ up to a rigid motion of $\R^{2}$:
$$\hd\left( \pa \Omega^{(i)}, \pa \Omega_0^{(i)}\right) \leq \omega(\e+\eta) ,$$
for a continuous function $\omega$ such that $\omega(0_+) = 0$.
In particular, since the inner parallel set of $\Om_0$ is an $N$-gon, by choosing $\e$ and $\eta<\eta_0(\e)$ small enough, the previous arguments and Step 1 prove that $\Omega$ is a convex, Cheeger regular $N$-gon. 
We are thus in the position to exploit \eqref{CRformula}, the characterization of the Cheeger constant for convex, Cheeger regular sets. We obtain the following:
\begin{equation}\label{bound1}\begin{split}
2(h(\Om)- h(\Om_0)) & = 
P(\Omega) + \sqrt{ P(\Omega)^2 - 4 \tau(\Omega)}-
P(\Omega_0) + \sqrt{ P(\Omega_0)^2 - 4\tau(\Omega_0)}\\
& = 
P(\Omega)- P(\Omega_0)
+ \sqrt{ P(\Omega)^2 - 4 \tau(\Omega)}- \sqrt{ 4\pi} \\
& \geq P(\Omega)- P(\Omega_0),
\end{split}
\end{equation}
where the final two lines both follow from \eqref{polyisop}.
Moreover, we have
\begin{equation}\label{bound2}
P(\Omega)- P(\Omega_0) \geq c(P(\Omega)^2 - P(\Omega_0)^2)
\end{equation}
for $\e$ small enough, because $\hd(\partial \Omega, \partial \Omega_0) \leq  \e$ and thus $P(\Omega) <2P(\Omega_0)$. Finally, Proposition~\ref{isoineSharp} combined with \eqref{bound1} and \eqref{bound2} yields
$$ h(\Om) - h(\Om_0) \geq c\mbox{ } \hd (\partial \Omega, \partial \rho\Omega_0)^2$$
for $c >0$ where $c$ depends only on $N$.
\end{proof}

\begin{figure}[t]
\begin{center}
\includegraphics[scale=0.65]{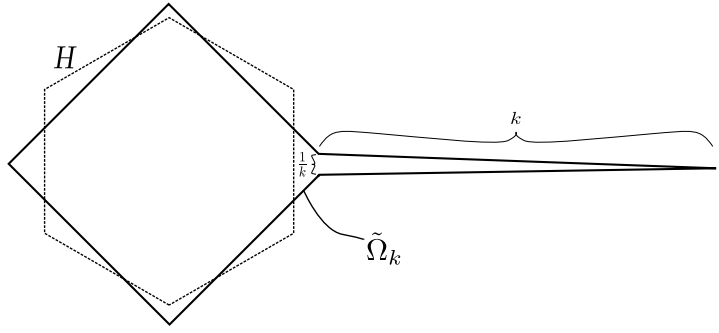}\caption{\small{A sequence of hexagons with Cheeger constants uniformly bounded and diameters unbounded. In this case, the Cheeger deficit cannot control the Hausdorff distance from a unit-area regular hexagon.}}\label{counterex}
\end{center}
\end{figure}

\begin{remark}{\rm 
We point out that \eqref{hdineq} fails to hold when $h(\Om) - h(\Om_0)$ is large, as the Cheeger constant does not detect behavior of the set away from where the Cheeger set lies. Consider $N=6$ for the sake of simplicity, and consider the following
 construction as in Figure ˜\ref{counterex}. 
First obtain $\tilde{\Om}_{k}$ by cutting away a triangle of base $\frac{1}{k}$ from a corner of a unit-area square $S$ and replacing it with another triangle having the same base but with height $k$. Set $\Omega_k: = \frac{\tilde{\Om}_k}{|\tilde{\Om}_k|}$ and note that their Cheeger constants are uniformly bounded.
Indeed, if $C_0$ is the Cheeger set of $S$, then by construction, $C_0 \subset \tilde{\Om}_k$ for all $k$, and thus 
$h(\tilde{\Om}_k ) \leq h(S), $
so
$$h(\Om_k ) = \sqrt{|\tilde{\Om}_k|} h(\tilde{\Om}_k) \leq  \sqrt{|\tilde{\Om}_k|} h(S) \leq\sqrt{\frac{3}{2}} h(S).$$
 On the other hand, clearly
$$\inf\{\hd(\partial \Omega_k, \partial\rho \Omega_0 )\ | \ \rho \text{ rigid motion of }\R^2\}\to \infty.$$
}
\end{remark}

\begin{figure}[h]
\begin{center}
\includegraphics[scale=0.55]{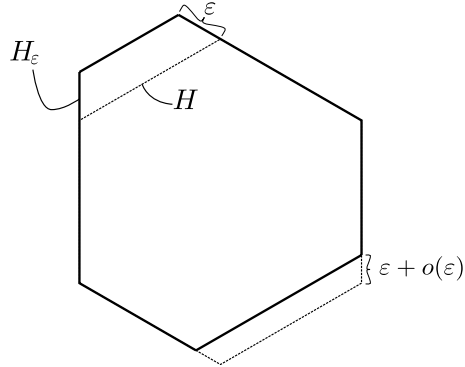}\caption{\small{We choose a small perturbation of the regular hexagon $H$ in an area-fixing way. Some easy calculations show that $h(H_{\e})-h(H)\approx c_0 A(\e)^2=c_1 |H_{\e} \Delta H|^2\approx c_2\hd(\pa H_{\e},\pa H)^2$, where $A(\e)$ denotes the areas bounded by the dotted lines. Thus yields the sharpness of \eqref{L1} and \eqref{hdM}. }}\label{smallperthex}
\end{center}
\end{figure}

\begin{remark}{\rm We also underline that the exponent $2$ in \eqref{L1} and \eqref{hdM} cannot be improved, due to the sharpness of the quantitative isoperimetric inequality for polygons. Indeed, suppose that, for example, inequality \eqref{L1} were to hold as
\begin{equation}
f(|\Om \Delta\rho \Om_0|)  \leq C_1 \left( h(\Om) - h(\Omega_0)\right),
\end{equation}
for some $f(x)$. Then by testing the inequality on a small area-fixing perturbations of the regular $N$-gon (as in figure \ref{smallperthex}) we immediately find that $f(|\Om_{\e} \Delta \rho \Om_0|)\leq c |\Om_{\e} \Delta \rho \Om_0|^2$.
}

\end{remark}

{\bf \noindent Acknowledgments.} We thank Berardo Ruffini for bringing the problem to our attention and Francesco Maggi for a careful reading of this note and for his useful comments. The work of MC was partially supported by the project 2010A2TFX2 "Calcolo delle Variazioni" funded by\textit{ the Italian Ministry of Research and University} and NSF Grant DMS-1265910. He also acknowledges the hospitality at the UT Austin during the 2014 fall semester where the present work has been done. The work of RN was supported by the NSF Graduate Research Fellowship under Grant DGE-1110007.

\end{document}